\author{Rafael Torres}
\title[Irreducible nonspin 4-manifolds with abelian fundamental group]{Geography and botany of irreducible nonspin symplectic 4-manifolds with abelian fundamental group}
\keywords{Symplectic geography problem, Luttinger surgery.}
\subjclass[2010]{Primary 57R17; Secondary 57M05, 54D05}
\address{California Institute of Technology - Mathematics\\ 1200 E California Blvd\\91125\\Pasadena, CA}
\email{rtorresr@caltech.edu}
\theoremstyle{plain}
\newtheorem{theorem}[equation]{Theorem}
\newtheorem{corollary}[equation]{Corollary}
\newtheorem{proposition}[equation]{Proposition}
\newtheorem{lemma}[equation]{Lemma}
\newtheorem{remark}[equation]{Remark}
\theoremstyle{definition}
\newtheorem{definition}[equation]{Definition}
\newtheorem{example}[equation]{Example}
\newcommand{\R}{\mathbb{R}}
\newcommand{\Z}{\mathbb{Z}}
\newcommand{\N}{\mathbb{N}}
\begin{document}

\maketitle

\emph{Abstract:} The geography and botany problems of irreducible nonspin symplectic 4-manifolds with a choice of fundamental group from $\{\Z_p, \Z_p\oplus \Z_q, \Z, \Z\oplus \Z_p, \Z\oplus \Z\}$ are studied by building upon the recent progress obtained on the simply connected realm. Results on the botany of simply connected 4-manifolds not available in the literature are extended.

\section{Introduction}

Topologists' understanding of smooth 4-manifolds has witnessed a drastic improvement during the last twenty years. Advances in symplectic geometry \cite{[T1], [Lu], [T0], [Gom], [T2], [ADK], [Ush], [HKo]} and inventions of gluing formulas \cite{[MMS], [BPa3]} for diffeomorphism invariants \cite{[W]} have paired elegantly with topological constructions \cite{[FS3], [FS6], [FS], [FPS], [NFS]}, offering a noteworthy insight into the smooth four-dimensional story. The most recent series of successes  \cite{[AP], [AP1], [BK2], [FPS], [ABBKP], [ABP], [AP], [NFS]} has answered as many questions as it has raised new ones, and 4-manifold theory remains to be an intriguing and active area of research.\\

Two factors responsible for the recent progress on the simply connected realm are the increase in the repertoire of techniques that manufacture symplectic 4-manifolds with small topological invariants, and a new perspective on the usage of already existing mechanisms of construction. The idea of
using symplectic sums \cite{[Gom]} of non-simply connected building blocks along genus 2 surfaces to kill fundamental groups in an
efficient way was introduced in \cite{[A1]}. Its immediate outcome was the construction of an exotic symplectic $\mathbb{CP}^2\#5\overline{\mathbb{CP}^2}$ and, later on, the existence of an exotic symplectic $\mathbb{CP}^2 \# 3\overline{\mathbb{CP}^2}$ 
\cite{[AP1]} was put on display. Shortly after, Luttinger surgery \cite{[Lu], [ADK]} was introduced to the list of symplectic constructions in
\cite{[BK2], [Stt]}. The combinations of these 
techniques yielded another construction of an exotic symplectic $\mathbb{CP}^2 \# 3\overline{\mathbb{CP}^2}$ in \cite{[FPS], [BK2]}. Eventually these techniques were succesful in unveiling exotic smooth structures on $\mathbb{CP}^2 \# 2 \overline{\mathbb{CP}^2}$ \cite{[AP], [NFS]}, the 4-manifold with the smallest Euler characterstic known to admit exotic smooth structures at the time this article was written.\\

Having in mind as a motivation Freedman's topological classification of simply connected 4-manifolds \cite{[F]}, the knowledge that has been accumulated on smooth 4-manifolds can be encoded to address the questions of existence and uniqueness on a possible classification scheme as follows. The \emph{geography problem} for symplectic 4-manifolds with a given fundamental group \cite{[MW], [Gom]} asks which homeomorphism classes are realized by an irreducible symplectic 4-manifold. The \emph{botany problem} \cite{[GS]} asks how many diffeomorphism classes exist within a given homeomorphism type. The geography problem for symplectic manifolds with trivial fundamental group was first studied systematically in \cite{[JPG]}, where regions are populated using the techniques of \cite{[Gom]}. The novel tools recently introduced to unveil a myriad of smooth structures \cite{[FS3], [FS], [FPS]} have allowed a hand-in-hand study of both problems. As a sample of the noteworthy successes in the area, it is worth pointing out that the symplectic geography question has been settled for simply connected 4-manifolds of negative signature, and it is known that these manifolds admit infinitely many smooth structures \cite{[Gom], [BPa], [JP], [FPS], [ABBKP], [AP]}.\\

The nonsimply connected realm remains to be somewhat uncharted territory. The topological classification of closed oriented 4-manifolds for several choices of good fundamental groups \cite{[FQ]}  have been established in \cite{[HK90], [HK93], [HT], [KL]}. These considerations motivate curiosity minding the symplectic geography and botany for manifolds with nontrivial fundamental group.\\

The contribution and purpose of this paper is to show how current efforts set on the simply connected realm extend naturally to smooth 4-manifolds with abelian fundamental groups of small rank, amongst other choices of fundamental groups. This has been pointed out previously by other authors \cite{[Gom], [BK3], [ABBKP], [AP]}, and extends work of the author done in \cite{[To], [To1]}. We build greatly upon the recent constructions to systematically study the geography and botany of irreducible symplectic 4-manifolds with abelian fundamental groups. The first examples of exotic 4-manifolds with cyclic fundamental group were constructed in \cite{[LO], [FM], [HK88], [OKo], [HK90], [Wa], [FST], [Gom], [BK3], [AP1]}, and examples of 4-manifolds with finite fundamental group can be found in \cite{[HK90], [Gom]}. Examples of minimal symplectic 4-manifolds with fundamental group $\Z \oplus \Z_p$ had been previously built in \cite{[Gom], [OS], [IS]}. Efforts towards more general fundamental groups can be found in \cite{[Gom], [BK1], [JPa], [BK3], [BK4], [Y]}.\\

The organization of the paper is as follows. The main result and the notation employed are presented in Section \ref{Section 2}. The results on homeomorphism criterions are recalled in Section \ref{Section 4.1}, Section \ref{Section 4.2}, and Section \ref{Section 4.3}. The fundamental building blocks are presented in Section \ref{Section 4.6} and Section \ref{Section 4.9}, and the basic construction tool in Section \ref{Section 4.5}. Our two main technical results are presented in Section \ref{Section 4.8}. With the purpose of providing the reader with hands-on constructions, two examples are worked out thoroughly in Section \ref{Section 4.7}. Finally, in Section \ref{Section 5} the efforts are put together to fill in regions of the symplectic geography for the choices of fundamental groups involved, and address the botany. The region of 4-manifolds with negative signature is populated in Section \ref{Section 5.2}, and a region of nonnegative signature in Section \ref{Section 5.3}.

\section{Notation and main results.}{\label{Section 2}}

A finite cyclic group of order $p\geq 2$ is denoted by $\Z_p$. The homeomorphism types of the 4-manifolds with nonspin universal cover, and whose  fundamental group is among the choices considered are given by 
\begin{center}\begin{itemize}
\item $\pi_1 = \Z_p\oplus \Z_q: b^+_2\mathbb{CP}^2 \# b^-_2\overline{\mathbb{CP}^2} \# L_{p, q}$,

\item $\pi_1 = \Z_p: b^+_2\mathbb{CP}^2 \# b^-_2\overline{\mathbb{CP}^2} \#L_p$, and 

\item $\pi_1 = \Z: b^+_2\mathbb{CP}^2 \# b^-_2\overline{\mathbb{CP}^2} \# S^1\times S^3$
\end{itemize}
\end{center}

according to \cite{[FQ], [HK93], [HT], [KL]} (see Section \ref{Section 4.1}). The pieces $L_{p, q}$ and $L_p$ are oriented closed smooth 4-manifolds that satisfy $\pi_1(L_{p, q}) = \Z_p \oplus \Z_q$, $\pi_1(L_p) = \Z_p$, $e(L_{p, q}) = 2 = e(L_p)$, and $\sigma(L_{p, q}) = 0 = \sigma(L_p)$. These manifolds are constructed as follows.  Consider the product $L(p, 1)\times S^1$ of a Lens space and a circle, and the map

\begin{center}
$\phi: L(p, 1)\times S^1 \rightarrow L(p, 1)\times S^1$

$\{pt\} \times \alpha \mapsto \{pt\} \times \alpha^q$.
\end{center}

Carve out the loop $\alpha^q$ and glue in a copy of $S^2\times D^2$ in order to kill the
generator that corresponds to the $\Z$-factor in $\pi_1(L(p, 1)\times S^1) \cong \Z_p \oplus \Z$ to obtain a manifold
\begin{center}
$L_{p, q}:= (L(p, 1)\times S^1 - (S^1\times D^3)) \cup_{\phi} (S^2\times D^2)$.
\end{center}

Using the Seifert-van Kampen theorem, one concludes $\pi_1(L_{p, q}) = \Z_p \oplus \Z_q$. The case $q = 1$ yields the manifold $L_p:= L_{p, 1}$ with finite cyclic fundamental group of order $p$.\\

A smooth closed 4-manifold $X$ is \emph{irreducible} if for every smooth connected sum decomposition $X = X_1 \# X_2$, either $X_1$ or $X_2$ is homeomorphic to $S^4$ \cite[Definition 10.1.17]{[GS]}. It is called \emph{minimal} if it is not the blow-up of another manifold \cite[p. 46]{[GS]}. The following definition will be used for our purposes (cf. \cite[Definition 1]{[AP2]}).\\

\begin{definition}{\label{Definition 1}} The homeomorphism class of a 4-manifold $X$ has the \emph{$\infty$-property} if and only if there
exists an infinite set $\{X_n: n\in \mathbb{N}\}$ that consists of minimal 4-manifolds such that

\begin{itemize}
\item if $n\neq m$, then $X_n$ is nondiffeomorphic to $X_m$
\item there exists an $m\in \N$ for which there is an element $X_m \in \{X_n\}$ that is symplectic, while $X_m$ is nonsymplectic for  every $m\neq n$; we will assume $m = 1$.
\item $X_n$ is homeomorphic to $X$ for all $n\in \N$.
\end{itemize}
The homeomorphism class of a 4-manifold $X$ has the \emph{$\infty^2$-property} if and only if there exists an infinite set $\{X_n : n\in \mathbb{N}\}$ that contains infinitely many irreducible symplectic and infinitely many minimal nonsymplectic 4-manifolds such that
\begin{itemize}
\item if $n\neq m$, then $X_n$ is nondiffeomorphic to $X_m$
\item $X_n$ is homeomorphic to $X$ for all $n\in \N$.\\
\end{itemize}
\end{definition}


The main result of the paper is the following theorem.

\begin{theorem}{\label{Theorem 2}}  Let $G \in \{\Z_p, \Z_p \oplus \Z_q, \Z \oplus \Z_q, \Z, \Z \oplus \Z\}$. Let $(c, \chi)$ be any pair of non-negative integers $($except for $(1, 1)$$)$, which satisfies
\begin{center}
$0\leq c \leq 8\chi - 1$.
\end{center}
There exists an irreducible symplectic 4-manifold $X^G$ such that 
\begin{center}
$\pi_1(X^G) = G$, $(c_1^2(X^G), \chi_h(X^G)) = (c, \chi)$.
\end{center}

If $\chi = 1$, then the homeomorphism types

\begin{itemize}
\item \begin{center} $\mathbb{CP}^2 \# (10 - c - 1)\overline{\mathbb{CP}^2}\# L_p$
\end{center}
\end{itemize}

have the $\infty$-property.\\

If $\chi \geq 2$ and $q$ is an odd prime number, then the homeomorphism types

\begin{itemize}
\item \begin{center} $(2\chi - 1)\mathbb{CP}^2 \# (10\chi - c - 1)\overline{\mathbb{CP}^2}$,
\end{center}
\item \begin{center}
$(2\chi - 1) \mathbb{CP}^2 \# (10\chi - c - 1)\overline{\mathbb{CP}^2} \# 
L_p$,
\end{center}
\item \begin{center} $(2\chi - 1) \mathbb{CP}^2 \# (10\chi - c - 1)\overline{\mathbb{CP}^2} \# 
L_{q, q}$, and 
\end{center}

\item \begin{center} $2\chi \mathbb{CP}^2 \# (10\chi - c)\overline{\mathbb{CP}^2} \# S^1\times S^3$
\end{center}
\end{itemize}
have the $\infty^2$-property.
\end{theorem}

\section{Background results, construction tools and building blocks.}{\label{Section 4}}

\subsection{Homeomorphism type of manifolds with $\pi_1 = \Z_p$.}{\label{Section 4.1}}

The topological classification of closed oriented 4-manifolds with finite cyclic group was obtained by Hambleton and Kreck \cite{[HK93]}. Due to the existence of 2-torsion, one must be careful when determining the parity of its intersection form. Let $\widetilde{X}\rightarrow X$ be the universal cover of the nonsimply connected 4-manifold $X$.

\begin{itemize}
\item $\omega_2$-type (I) if $\omega_2(\widetilde{X}) \neq 0$,
\item $\omega_2$-type (II) if $\omega_2(X) = 0$,
\item $\omega_2$-type (III) if $\omega_2(X) \neq 0$, but $\omega_2(\widetilde{X}) = 0$.
\end{itemize}

\begin{theorem}{\label{Theorem 6}} (Hambleton - Kreck, \cite[Theorem C]{[HK93]}). A smooth closed oriented 4-manifold $X$ with finite cyclic fundamental group is classified up to homeomorphism by the fundamental group, the intersection form on $H_2(X; \Z)$/Tors, and the $\omega_2$-type. Moreover, any isometry of the intersection form can be realized by a homeomorphism.
\end{theorem}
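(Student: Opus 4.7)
The plan is to apply Kreck's modified surgery machine. Given $X$ closed oriented with $\pi_1(X)=\Z_p$, I would first pin down the normal $1$-type $(B,\xi)\to BSTOP$: it is determined by $\pi_1$ together with the $w_2$-type (spin; not spin but universal cover spin; totally non-spin), giving essentially three fibrations over $B\Z_p$ with fiber $BSpin$ or $BSO$ twisted appropriately by the relevant $k$-invariant. The key preliminary is to show that any two manifolds with the stated invariants admit compatible normal $B$-structures, which reduces to a cohomological matching of $w_2$ along an isomorphism of fundamental groups and is straightforward in each of the three cases.

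Next I would show that two such manifolds $X_1,X_2$ with isometric intersection forms on $H_2/\text{Tors}$ are normally $B$-bordant. This uses a calculation of the relevant bordism group $\Omega_4^{STOP}(B,\xi)$, whose invariants in the finite cyclic case are signature, the class of the equivariant intersection form modulo hyperbolics, plus (in type I) the $Arf$/$KS$ invariant: the hypotheses force all these to agree. With a normal $B$-bordism $W^5$ in hand, the heart of the proof is Kreck's theorem that $W$ can be converted, by surgery in dimensions $\leq 2$, to an $s$-cobordism provided the surgery obstruction in the appropriate $l_5(\Z[\Z_p])$ vanishes; here the vanishing comes precisely from the isometry hypothesis together with cancellation of hyperbolics over $\Z[\Z_p]$ once one stabilizes by connected sums with $S^2\times S^2$ (and then destabilizes using Hambleton--Kreck cancellation results, which is the step that is peculiar to finite cyclic groups).

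Having produced an $s$-cobordism, I would invoke Freedman--Quinn's topological $s$-cobordism theorem, which applies because the finite cyclic groups $\Z_p$ are good in the sense of Freedman--Quinn; this delivers the desired homeomorphism $X_1\cong X_2$. For the realization clause, I would repeat the same argument starting from an abstract isometry $f$ of intersection forms: use $f$ to build the bordism (by attaching handles realizing the basis change), then run the same surgery/$s$-cobordism reduction to obtain a homeomorphism whose induced map on $H_2/\text{Tors}$ is $f$.

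The main obstacle, and the step that really uses the hypothesis that $\pi_1$ is finite cyclic, is the cancellation of hyperbolic summands over $\Z[\Z_p]$; for general finite groups one only obtains a stable classification, and one must use the detailed algebraic $K$- and $L$-theory of $\Z[\Z_p]$ to kill the potential $S^2\times S^2$ stabilizations. A secondary technical point is correctly tracking the $w_2$-type in the non-spin cases so that the normal $B$-structures on $X_1$ and $X_2$ glue along the bordism; this requires a careful choice of lift of $w_2$ and is handled case-by-case.
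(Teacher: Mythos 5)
The paper does not prove this statement at all: it is quoted verbatim as Theorem C of Hambleton--Kreck \cite{[HK93]} and used as a black-box homeomorphism criterion. Your outline (normal $1$-type fixed by $\pi_1$ and the $w_2$-type, normal bordism over it, Kreck's modified surgery to an $s$-cobordism after stabilization, Hambleton--Kreck cancellation of $S^2\times S^2$ summands over $\Z[\Z_p]$, and Freedman--Quinn since finite cyclic groups are good) is essentially the strategy of that cited proof, so it is consistent with the source the paper relies on rather than an alternative to anything argued here.
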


By using the known work of Donaldson and of Minkowski-Hasse on the classification of bilinear forms over the integers that are realized as intersection forms of smooth 4-manifolds, the previous result can be restated as follows.

\begin{theorem}{\label{Theorem 7}} A smooth, closed, oriented 4-manifold with finite cyclic fundamental  and indefinite intersection form is classified up to homeomorphism by the fundamental
group, the Betti numbers $b^+_2$ and $b^-_2$, the parity of the intersection form and the $\omega_2$-type.
\end{theorem}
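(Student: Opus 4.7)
The plan is to deduce this refined classification from the Hambleton--Kreck theorem stated immediately above, by reducing the isometry class of the intersection form on $L := H_2(X;\Z)/\textnormal{Tors}$ to the discrete data $(b_2^+, b_2^-, \text{parity})$; this reduction is possible precisely because we are in the indefinite range. By Poincar\'e duality the form $Q_X$ on $L$ is a symmetric unimodular $\Z$-valued bilinear form of rank $b_2^+ + b_2^-$ and signature $b_2^+ - b_2^-$, and this is insensitive to the torsion in $H_2(X;\Z)$ that arises from $\pi_1 = \Z_p$.

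Next I would invoke the classical Hasse--Minkowski/Milnor--Husemoller classification of indefinite unimodular symmetric bilinear forms over $\Z$: such a form is determined up to isometry by its rank, signature, and parity. Explicitly, an indefinite odd form is isometric to $b_2^+\langle 1\rangle \oplus b_2^-\langle -1\rangle$, while an indefinite even form is isometric to $r(\pm E_8) \oplus sH$, where $H$ is the hyperbolic plane and the integers $r, s$, together with the uniform sign of the $E_8$ summands, are forced by the rank and signature. Consequently, under the indefiniteness hypothesis the isometry class of $Q_X$ is completely pinned down by the triple $(b_2^+, b_2^-, \text{parity})$.

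Plugging this into the Hambleton--Kreck theorem (which classifies $X$ up to homeomorphism by $\pi_1$, the intersection form on $L$, and the $\omega_2$-type) immediately yields the stated refinement: $(\pi_1, b_2^+, b_2^-, \text{parity}, \omega_2\text{-type})$ is a complete set of homeomorphism invariants in the indefinite regime. The second clause of Hambleton--Kreck — that every isometry of the form is realized by a homeomorphism — is what turns the isometry statement into a classification statement.

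The only delicate point, and the place where the indefiniteness hypothesis is indispensable, is the appeal to Hasse--Minkowski. In the definite case one would have to invoke Donaldson's diagonalization theorem for odd forms, and even then the classification of \emph{even} definite unimodular lattices of fixed rank and signature is genuinely nontrivial (e.g.\ the Leech lattice versus $3E_8$ in rank 24), so the clean reduction to $(b_2^+, b_2^-, \text{parity})$ would fail. Given the indefinite hypothesis in the statement, however, no further obstruction arises and the argument is complete.
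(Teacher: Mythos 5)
Your argument is correct and is essentially the paper's own route: the paper simply observes that the Hambleton--Kreck theorem combined with the Hasse--Minkowski classification of indefinite unimodular forms (it also mentions Donaldson, which as you note is only needed in the definite case) reduces the intersection-form datum to $(b_2^+, b_2^-, \text{parity})$. Your write-up just makes this reduction explicit, including the correct caveat about why indefiniteness is essential.
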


\subsection{Homeomorphism type of manifolds with $\pi_1 = \Z_q\oplus \Z_q$, $q$ an odd prime number.}{\label{Section 4.2}}

The classification up to homeomorphism of 4-manifolds with fundamental group among the choices of finite noncyclic abelian groups under consideration is given in the following theorem.

\begin{theorem}{\label{Theorem 8}} (Hambleton - Kreck, \cite[Theorem B]{[HK93]}). Let $X$ be a smooth closed oriented 4-manifold, and let $\pi_1(X) = \pi$ be a finite group of
odd order. When $\omega_2(\tilde{X}) = 0$ (resp. $\omega_2(\tilde{X}) \neq 0$), assume that
\begin{center}
$b_2(X) - |\sigma(X)| > 2 d(\pi)$,
\end{center}
$(resp.  >2d(\pi) + 2)$. Then, $X$ is classified up to homeomorphism by the signature, Euler characteristic, type, and fundamental class in $H_4(\pi, \Z) / Out(\pi)$.
\end{theorem}

In the statement of Theorem \ref{Theorem 8}, the outer automorphism group of $\pi$ is denoted by $Out(\pi)$. The stability condition required on the lower bound for the Euler 
characteristic, $d(\pi) \in \Z$ (see \cite{[HK93]} for details), depends on the fundamental group of the manifold. In \cite{[To]}, it was proven that $d(\Z_q \oplus \Z_q) = 1$.

\subsection{Homeomorphism type of manifolds with $\pi_1 = \Z$.}{\label{Section 4.3}}

Provided a stability condition holds, the homeomorphism criterion for oriented smooth 4-manifolds with $\pi_1 = \Z$ is similar to Freedman's known result \cite{[F]} (see also \cite{[FQ], [SW], [KL]}).

\begin{theorem} {\label{Theorem 9}} (Hambleton - Teichner, \cite[Corollary 3]{[HT]}). If $X$ is a closed oriented smooth 4-manifold with infinite cyclic fundamental group and satisfies the inequality
\begin{center}
$b_2(X) - |\sigma(X)| \geq 6$,
\end{center}
then $X$ is homeomorphic to the connected sum of $S^1 \times S^3$ with a unique closed
simply-connected 4-manifold. In particular, $X$ is determined up to homeomorphism by its second Betti
number $b_2(X)$, its signature $\sigma(X)$ and its $\omega_2$-type. Moreover, $X$ is either spin or nonspin depending on the parity of its intersection form.
\end{theorem}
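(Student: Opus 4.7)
The plan is to reduce this statement to Freedman's topological classification of simply connected closed $4$-manifolds, by exhibiting a homeomorphism $X \cong N \#(S^1\times S^3)$ with $N$ closed and simply connected. Once such a splitting is in hand, one has $b_2(N) = b_2(X)$, $\sigma(N) = \sigma(X)$, and the $\omega_2$-type of $X$ is recovered from the parity of the intersection form of $N$; Freedman's theorem then identifies $N$ uniquely and finishes the argument.

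\smallskip

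First I would determine the normal $1$-type $\xi \colon B \to BSTOP$ of $X$. Because $\pi_1(X) = \Z$ and the $\omega_2$-type takes one of a short list of values (distinguishing whether $X$ and $\widetilde{X}$ are spin), there are correspondingly few candidates for $B$, each a principal fibration over $K(\Z,1) = S^1$. Two manifolds sharing a normal $1$-type $\xi$ and agreeing in the intersection-form data represent the same class in the bordism group $\Omega_4(B,\xi)$; Kreck's modified surgery machinery then delivers a stable homeomorphism $X_0 \# k(S^2\times S^2) \cong X_1 \# k(S^2\times S^2)$ for some $k \geq 0$.

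\smallskip

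Second I would split off an $S^1 \times S^3$ summand. Represent a generator of $\pi_1(X) = \Z$ by an embedded loop $\gamma$ and frame its normal bundle; the framing obstruction lies in $\pi_1(SO(3)) = \Z/2$ and is controlled by the $\omega_2$-type. The complement $C = X \setminus \textnormal{int}\,N(\gamma)$ has boundary $S^1 \times S^2$, and the splitting reduces to capping a meridian of $\partial C$ with a locally flat disc in $C$. Freedman's topological disc embedding theorem, applied in the universal cover after killing $\pi_1(C)$ by the framed $2$-handle dual to $\gamma$, produces such a disc, and gluing yields the desired connected sum decomposition.

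\smallskip

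The stable-range hypothesis $b_2(X) - |\sigma(X)| \geq 6$ enters twice in the final step: it guarantees enough algebraically dual hyperbolic pairs in $H_2(X;\Z[\Z])$ to run Freedman's disc embedding in the presence of the infinite cyclic fundamental group, and it supplies enough $S^2 \times S^2$ summands to absorb, and then cancel, the auxiliary copies produced by Kreck's stable theorem via a Wall-type cancellation lemma in the topological category. The hardest step will be the splitting in Step $2$: the interplay between the $\omega_2$-type controlled framing of $\gamma$, the $\Z$-equivariant disc embedding in the cover, and the cancellation of hyperbolic summands is where the argument lives irreducibly in the topological (rather than smooth) category and where the precise shape of the stable-range inequality is non-negotiable.
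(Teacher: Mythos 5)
The paper does not prove this statement at all: it is quoted as a background homeomorphism criterion from Hambleton--Teichner \cite{[HT]}, so there is no in-paper argument to compare with. The original proof runs through the Freedman--Quinn surgery/classification theory for the good group $\Z$ together with the main algebraic theorem of that paper: a $\Z$-unimodular hermitian form on a finitely generated free $\Z[t,t^{-1}]$-module whose rank minus the absolute value of its signature is at least $6$ is extended from $\Z$, and extended forms correspond exactly to connected sums $N \# (S^1\times S^3)$. The paper itself gestures at this mechanism right after the statement, when it says that having intersection form extended from the integers is equivalent to the displayed inequality.

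Measured against that, your proposal has a genuine gap at its center. The step where you claim the hypothesis $b_2(X)-|\sigma(X)|\geq 6$ ``guarantees enough algebraically dual hyperbolic pairs in $H_2(X;\Z[\Z])$'' is precisely the hard content of the theorem, and it is not a formality you can absorb into Freedman's disc embedding or into Kreck's stable classification plus a Wall-type cancellation: the very paper being cited constructs a non-extended hermitian form over $\Z[\Z]$, which shows that below the stable range the splitting genuinely fails, so whatever argument you give must use the inequality in an essential algebraic way (extension/cancellation of forms over $\Z[t,t^{-1}]$), and your sketch never does this. The geometric splitting step is also not right as stated: for a circle $\gamma$ generating $\pi_1$, the complement $C=X\setminus \textnormal{int}\,N(\gamma)$ has boundary $S^1\times S^2$ and the ``meridian'' of $\gamma$ is a $2$-sphere, not a circle, so there is no meridional disc to cap; what is actually needed is a locally flat non-separating $3$-sphere dual to the generator (equivalently, the statement that the equivariant form is extended), and in the topological category this is extracted from the surgery and $s$-cobordism machinery for the good group $\Z$, not from a single application of the disc embedding theorem in the universal cover. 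As written, the proposal defers exactly the two points (extension of the form, and the geometric realization of the splitting) that constitute the theorem.
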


\subsection{Basic symplectic building blocks.} {\label{Section 4.6}}

Consider the 4-torus $T^4 = T^2 \times T^2$ equipped with the product symplectic form. Let $x, y, a, b$ denote both the generators of the group $\pi_1(T^2\times T^2) = \Z x \oplus \Z y \oplus \Z a \oplus \Z b$, and the corresponding loops as well. This convention will be maintained through out the paper. The tori \begin{center} $T_1:= x \times a, T_2:= y\times a$ \end{center} and their respective geometrically dual tori $T_1^d:= y\times b$,  $T_2^d:= x\times b$ are Lagrangian, and the torus $T_3:= a\times b$ and its geometrically dual torus $T_3^d:= x\times y$ are symplectic. The characteristic numbers are $c_1^2(T^4) = 0 = \chi_h(T^4)$.\\

The calculation of the fundamental group of the complement of surfaces inside a manifold plays a fundamental role in recent constructions of 4-manifolds. Choices of basepoints are fundamental in the application of the Seifert-van Kampen theorem, which is to be used with care. For these matters we build greatly in the analysis done in \cite{[BK2], [BK4], [BK3]}. In particular, we have the following result.

\begin{proposition}{\label{Proposition 14}} (Baldridge - Kirk \cite[Theorem 2]{[BK2]}). The fundamental group \begin{center} $\pi_1(T^4 - (T_1 \cup T_2))$\end{center} is generated by the loops $x, y, a, b$ and the relations $[x, a] = [y, a] = 1$ hold. The meridians of the tori and the two Lagrangian push offs of their generators are given by:
\begin{center}
$T_1: \mu_1 = [b^{-1}, y^{-1}], m_1 = x, l_1 = a$, and\\
$T_2: \mu_2 = [x^{-1}, b], m_2 = y, l_2 = bab^{-1}$.\\
\end{center}
\end{proposition}

As our next building block, we take the product of a genus two surface and a torus, $\Sigma_2 \times T^2$, and endow it with the product symplectic form. Its characteristic numbers are $c_1^2(\Sigma_2 \times T^2) = 0 = \chi_h(\Sigma_2 \times T^2)$. Let $a_1, b_1, a_2, b_2$ be the loops and generators of $\pi_1(\Sigma_2)$, and $x, y$ the loops and generators of $\pi_1(T^2)$. Inside this manifold there are four pairs of homologically essential Lagrangian tori, and a symplectic surface of genus two and self intersection zero. The tori are displayed in the statement of the following proposition; the genus two surface is a parallel copy of the surface $\Sigma_2 \times \{pt\}$, and we will denote it by $\Sigma_2$.  

\begin{proposition}{\label{Proposition 15}} (Baldridge - Kirk  \cite[Proposition 7]{[BK3]}). The fundamental group
\begin{center}$\pi_1(\Sigma_2\times T^2 - (\Sigma_2 \cup T_1 \cup \cdots \cup T_4))$\end{center}
is generated by the loops  $x, y, a_1, b_1, a_2, b_2$. Moreover,  with respect to certain paths to the boundary of the tubular neighborhoods of the $T_i$ and $\Sigma_2$, the meridians and two Lagrangian push offs of the surfaces are given by
\begin{itemize}
\item $T_1: m_1 = x, l_1 = a_1$, $\mu_1= [b^{-1}, y^{-1}]$,
\item $T_2:  m_2 = y, l_2 = b_1a_1b^{-1},$ $\mu_2 = [x^{-1}, b_1]$,
\item $T_3:  m_3 = x, l_3 = a_2$, $\mu_3 = [b_2^{-1}, y^{-1}]$,
\item $T_4: m_4 = y, l_4 = b_2a_2b_2^{-1}$, $\mu_4 = [x^{-1}, b_2]$,
\item $\mu_{\Sigma_2} = [x, y]$.
\end{itemize}

The loops $a_1, b_1, a_2, b_2$ lie on the genus 2 surface and form a standard set of generators; the relation $[a_1, b_1][a_2, b_2] = 1$ holds.\\
\end{proposition}

The final building block is obtained by applying Luttinger surgeries along Lagrangian tori to the product of two genus 2 surfaces $\Sigma_2\times \Sigma_2$ equipped with the product symplectic form. Let $x_1, y_1, x_2, y_2$ be the generators of $\pi_1(\Sigma\times \{x\})$, and $a_1, b_1, a_2, b_2$ be the generators of $\pi_1(\{x\}\times \Sigma_2)$. 

\begin{proposition} \cite[Lemma 16]{[ABBKP]}, \cite[Section 4]{[FPS]}.{\label{Proposition 16}} There exists a minimal symplectic 4-manifold $Z$ with $c_1^2(Z) = 8$ and $\chi_h(Z) = 1$ that contains eight homologically essential Lagrangian tori $\{S_1, S_2, S_3, S_4, S_5, S_6, S_7, S_8\}$ (each $S_i$ has a geometrically dual torus $S_i^d$ so that all other intersections are zero). The fundamental group $\pi_1(Z - (S_1 \cup \cdots \cup S_8))$ is generated by $x_1, y_1, x_2, y_2$ and $a_1, b_1, a_2, b_2$, and the meridians and Lagrangian push-offs are given by
\begin{itemize}
\item $S_1: \mu_1 = [b_1^{-1}, y_1^{-1}], m_1 = x_1, l_1 = a_1$, 
\item  $S_2: \mu_2 = [x_1^{-1}, b_1], m_2 = y_1, l_2 = b_1 a_1 b_1^{-1}$,
\item  $S_3: \mu_3 = [b_2^{-1}, y_1^{-1}], m_3 = x_1, l_3 = a_2$,
\item $S_4: \mu_4 = [x_1^{-1}, b_2], m_4 = y_1, l_4 = b_2a_2b_2^{-1}$,
\item $S_5: \mu_5 = [b_1a_1^{-1}b_1^{-1}, y_2^{-1}], m_5 = x_2, l_5 = b_1^{-1}$, 
\item  $S_6: \mu_6 = [x_2^{-1}, b_1a_1b_1^{-1}], m_6 = y_2, l_6 = b_1 a_1 b_1^{-1} a_1^{-1}b_1^{-1}$,
\item $S_7: \mu_7 = [b_2 a_2^{-1} b_2^{-1}, y_2^{-1}], m_7 = x_2, l_7 = b_2^{-1}$,
\item $S_8: \mu_8 = [x_2^{-1}, b_2a_2b_2^{-1}], m_8 = y_2, l_8 = b_2 a_2 b_2^{-1} a_2^{-1} b_2^{-1}$.

\end{itemize}
\end{proposition}

\subsection{Luttinger and torus surgeries}{\label{Section 4.5}}
Let $T$ be a Lagrangian torus in a symplectic 4-manifold $X$. By the Darboux-Weinstein Theorem \cite{[McS]}, there exists a parametrization $T^2\times D^2 \cong N_T \subset X$ of a tubular neighborhood $N_T$ of $T$, for which the image of $T^2 \times \{d\}$ ($d\in D^2$) is Lagrangian. Let $d\in D^2 - \{0\}$, the \emph{Lagrangian push off} or \emph{Lagrangian framing} of $T$ is the push off
\begin{center}
 $F_d: T \rightarrow T^2 \times \{d\} \subset X - T$
 \end{center}
 determined by $d$. The smooth isotopy class of $F_d: T \rightarrow X - T$ depends only on the symplectic structure of $X$ around a neighborhood of $T$. 
 Let $\gamma$ be an embedded curve in $T$. The \emph{Lagrangian push off} of $\gamma$ is the isotopy class of the image $F_d(\gamma)$. A \emph{meridian} of the torus $T$ is a curve in the isotopy class of $\{t\} \times \partial D^2 \subset \partial (N_T)$, and it will be denoted by $\mu_T$.\\

Let $\alpha$ and $\beta$ be the generators of $\pi_1(T)$. Let $m_T = F_d(\alpha)$ and $l_T = F_d(\beta)$ for $d\in \partial D^2$ be the push offs of the loops in $\partial N_T = T^3$. These are loops homologous in $N_T$ to $\alpha$ and $\beta$ respectively. The loops $\{m, l, \mu_T\}$ generate $H_1(\partial(N_T); \mathbb{Z}) = \mathbb{Z}^3$. We choose a basepoint $t$ laying on $\partial(N_T)$, and set $m_T, l_T, \mu_T \in \pi_1(\partial(N_T), t) = \mathbb{Z}^3$.\\

The manifold obtained from $X$ by performing a $(p, q, n)$-torus surgery on $T$ along $\gamma$ is defined as

\begin{center}
$X_{T, \gamma}(p, q, n) := (X - N_T) \cup_{\phi} (T^2 \times D^2)$,
\end{center}

where the gluing map $\phi: T^2\times \partial D^2 \rightarrow \partial (X - N_T)$ satisfies \begin{center}$\phi_*([\partial D^2]) = p[m_T] + q[l_T] + n[\mu_T]$ in $H_1(\partial (X - N_T)); \Z)$.\end{center}

For $n = 1$, the procedure described above is known as \emph{Luttinger surgery on $T$ along $\gamma \subset T$}, and $X_{T, \gamma}(p, q, 1)$ admits a symplectic structure \cite{[Lu], [ADK]}.\\

If the base point of $X$ is not on the boundary of the tubular neighborhood $N_T$ of $T$, then the based loops $\mu$ and $\gamma$ are to be joined by the same path in $X - T$. The core torus of the surgery $S^1\times S^1 \times \{0\} \subset X_{T, \gamma}(p, q, 1)$ will be denoted by $T_{(p, q, 1)}$. Regarding the fundamental group of the manifolds $X_{T, \gamma}(p, q, n)$ constructed in this paper, we the following considerations suffice. We will fix generating curves $\alpha, \beta$ on the torus $T$, and express the curve as $\gamma = \alpha^a \beta^b$ in $\pi_1(T)$ for $a, b\in \{0, 1\} $ such that $a \neq b$.

\begin{lemma}{\label{Lemma 13}} (cf. \cite[Lemma 4]{[BK2]}). The manifold obtained by applying a $(p, q, n)$-torus surgery to $X$ on the torus $T$ along the curve $\gamma = a\alpha + b\beta$ has fundamental group given by\begin{center}
$\pi_1(X_{T, \gamma}(p, q, n)) \cong \pi_1(X - T)/N(\mu_T^nm_T^{ap}l_T^{bq})$,
\end{center}
where $N(\mu_T^nm_T^pl_T^q)$ denotes the normal subgroup generated by $\mu_T^nm_T^pl_T^q$.
\end{lemma}

Every torus surgery that is employed in this paper will require for only one of the two integers $p$ or $q$ to be nonzero. From now on, the data needed to specify a torus surgery will be encoded in the following terminology. A $\pm p/n$-torus surgery on the torus $T$ along the curve $m_T$ implies we are taking $q = 0$; analogously, a $\pm q/n$-torus surgery on the torus $T$ along the curve $l_T$ implies that we are setting $p = 0$.

\subsection{Two examples}{\label{Section 4.7}} Before we start filling vast regions, the results and building blocks from the previous sections are used to build hands-on examples. 

\begin{example} {\label{Example 17}} With the goal of constructing irreducible 4-manifolds with $c_1^2 = 3$, and $\chi_h = 1$, consider $T^2\times T^2$ and $T^2\times S^2$ equipped with the product symplectic forms.
We begin by finding symplectic surfaces of genus 2 inside of each of these manifolds. For $t_1, t_2\in T^2$, take   $T^2 \times \{t_2\} \cup \{t_1\} \times T^2 \subset T^2\times T^2$. By symplectically resolving the double point, we obtain a symplectic surface of genus 2 and self intersection $([T^2\times \{t_2\}] + [\{t_1\}\times T^2])^2 = 2$.  Blow up at the two intersection points, and obtain a symplectic surface of genus 2 and self intersection zero $\Sigma_2 \subset T^2\times T^2 \# 2 \overline{\mathbb{CP}^2}$ \cite[Building block 5.7]{[Gom]}.\\

Analogously, we obtain a symplectic surface of genus 2 that has self intersection zero $\Sigma_2' \subset T^2\times S^2 \# 3 \overline{\mathbb{CP}^2}$, as follows (cf. \cite{[RF1]}, \cite[Section 3]{[AP]}). Consider the embedded torus $T'$ that represents the homology class $2[T^2\times \{s_1\}]$, where $s_1\in S^2$. Take the union of this torus with $T^2\times \{s_1\} \cup \{t_1\} \times S^2$ for a given $t_1\in T^2$. Blow up one of the double points, and symplectically resolve \cite{[Gom]} the remaining pair of double points in order to get a genus 2 surface of self intersection 2. Blow up at these intersection points to obtain the desired $\Sigma_2' \subset T^2\times S^2 \# 3 \overline{\mathbb{CP}^2}$.\\ 

Since both $\Sigma_2$ and $\Sigma_2'$ have trivial self intersection, there exist diffeomorphisms $\nu(\Sigma_2) \rightarrow \Sigma_2 \times D^2$ and $\nu(\Sigma_2')\rightarrow \Sigma_2' \times D^2$, where $\nu(\Sigma_2), \nu(\Sigma_2')$ are tubular neighborhoods of the surfaces inside $T^4\# 2 \overline{\mathbb{CP}^2}$ and $T^2\times S^2 \# 3 \overline{\mathbb{CP}^2}$ respectively. Build the generalized fiber sum along the genus 2 surfaces

\begin{center}
$Z:= (T^2\times T^2 \# 2 \overline{\mathbb{CP}^2} ) \#_{\Sigma_2 = \Sigma_2'} (T^2\times S^2 \# 3 \overline{\mathbb{CP}^2})$.\\
\end{center}

A well-known result of Gompf \cite{[Gom]} (cf. \cite{[MW]}) implies that $Z$ is a symplectic manifold, and its  characteristic numbers  are given by $c_1^2(Z) = 3$, and $\chi_h(Z) = 1$. A result of Usher \cite[Theorem 1.1]{[Ush]} implies that $Z$ is minimal. In order to conclude its irreducibility using the criteria of Hamilton-Kotschick \cite[Corollary 1]{[HKo]}, we proceed to compute its fundamental group using the Seifert-van Kampen theorem.\\

The standard presentations of the groups involved in the cut-and-paste construction of $Z$ are

\begin{center}
$\pi_1(T^4) = \left\langle x, y, a, b | [x, y] = [a, b] = [x, a] = [x, b] = [y, a] = [y, b] = 1 \right\rangle$,
\end{center}

\begin{center}
$\pi_1(T^2\times S^2) = \left\langle \alpha, \beta | [\alpha, \beta] = 1\right\rangle$,
\end{center}

\begin{center}
 $\pi_1(\Sigma_2) = \left\langle a_1, b_1, a_2, b_2 | [a_1, b_1][a_2, b_2] = 1\right\rangle$, and
\end{center}

\begin{center}
$\pi_1(\Sigma_2') = \left\langle a_1', b_1', a_2', b_2' | [a_1', b_1'][a_2', b_2'] = 1\right\rangle$.
\end{center}

Let the inclusion $\Sigma_2 \hookrightarrow T^2\times T^2 \# 2 \overline{\mathbb{CP}^2}$ induce the homomorphism that maps the generators as follows:

\begin{center}
$a_1\mapsto x$, $b_1\mapsto y$\\
$a_2\mapsto a$, $b_2 \mapsto b$.
\end{center}

Similarly, let the homomorphism induced by the inclusion, $\Sigma_2' \hookrightarrow T^2\times S^2 \# 3 \overline{\mathbb{CP}^2}$ map the generators of the fundamental groups by

\begin{center}
$a_1'\mapsto \alpha$, $b_1'\mapsto \beta^2$\\
$a_2'\mapsto \alpha^{-1}$, $b_2' \mapsto \beta^{-2}$
\end{center}

(cf. \cite[Section 3]{[AP]}). Take the diffeomorphism $\phi: \Sigma_2 \times \partial D^2 \rightarrow \Sigma_2' \times \partial D^2$ that induces the homomorphism on fundamental groups that identifies the generators as follows:

\begin{center}
$a_1\mapsto a_1'$, $b_1\mapsto b_1'$\\
$a_2\mapsto a_2'$, $b_2 \mapsto b_2'$.
\end{center}


Furthermore, in each case the meridians of the surface are homotopically trivial in the complement. Indeed, an exceptional sphere introduced by a blow up intersects the surface transversally in one point. Thus, the meridians $\mu_{\Sigma_2}, \mu_{\Sigma_2'}$ are nullhomotopic in $T^4 \# 2\overline{\mathbb{CP}^2} - \nu(\Sigma_2)$ and $T^2\times S^2 \# 3\overline{\mathbb{CP}^2} - \nu(\Sigma_2')$, respectively.\\

Using the Seifert-van Kampen theorem, we conclude that the group $\pi_1(Z)$ is generated by the elements $x, y, a, b, \alpha, \beta$, and that they satisfy the relations

\begin{center}
$[x, y] = [a, b] = [x, a] = [x, b] = [y, a] = [y, b] = [\alpha, \beta] = 1$, $x = \alpha, y = \beta^2, a = \alpha^{-1}, b = \beta^{-2}$.  
\end{center}

Thus, $\pi_1(Z) = \left\langle \alpha, \beta | [\alpha, \beta^2] = 1\right\rangle \cong \Z \oplus \Z_2$.\\

Since $\Z\oplus \Z_2$ is a residually finite group, the symplectic 4-manifold $Z$ is irreducible \cite[Corollary 1]{[HKo]}. In order to produce irreducible and minimal manifolds with $\pi \in \{\Z_2, \Z_{p_1}\oplus \Z_2\}$, we proceed as follows. The work of Taubes indicates that the Seiberg-Witten invariant of the symplectic manifold $Z$ is nontrivial \cite{[T1]}. Moreover, it contains two pairs of Lagrangian tori $\{T_1, T_1^{d}\}, \{T_2, T_2^{d}\}$ such that $T_i$ intersects $T_i^{d}$  ($i = 1, 2$) transversally at one point, and any other intersection is empty. These tori are inherited from the $T^2\times T^2$ building block in the construction of $Z$. By Proposition \ref{Proposition 14}, the Lagrangian pushoff and meridian and of the torus $T_1$ are 

\begin{center}
$T_1: m_1 = \alpha, \mu_{T_1} = 1$.
\end{center}

Perform a $(p_1/n)$-torus surgery on $T_1$ along $m_1$ to produce an infinite set of pairwise nondiffeomorphic manifolds
\begin{center}
$\{Z_{p_1, 2, n} : p_1, n \in \N\}$.
\end{center}

By the Seifert-van Kampen theorem (cf. Lemma \ref{Lemma 13}), \begin{center} $\pi_1(Z_{p_1, 2, n}) \cong \left\langle \alpha, \beta | \alpha^{p_1} = \beta^{2} = 1^n = [\alpha, \beta^2] \right\rangle \cong \Z_{p_1}\oplus \Z_2$.\end{center}

The characteristic numbers are invariant under torus surgeries. This yields $c_1^2(Z_{p_1, 2, n}) = c_1^2(Z) = 3$, and $\chi_h(Z_{p_1, 2, n}) = \chi_h(Z) = 1$ for all $p_1, n \in \N$. As it was mentioned in Section \ref{Section 4.5}, the $(p_1/1)$-torus surgery is known as Luttinger surgery, and the manifolds $\{Z_{p_1, 2, 1}\}$ are symplectic \cite{[Lu], [ADK]}. As in \cite{[FPS], [FS], [ABP], [NFS]} the diffeomorphism classes of the manifolds $\{Z_{p_1, 2, n}\}$ are distinguished by the Seiberg-Witten invariants \cite[Theorem 5.3]{[FS]}, which are calculated according to the surgery coefficient $n$ of the torus surgery by the Morgan-Mrowka-Szab\'o formula \cite{[MMS]}, using the formulation given in \cite[Theorem 3.4]{[SZs]} (please, see proof of Lemma \ref{Lemma 19} for more details). We have produced then an infinite set $\{Z_{p_1, 2, n}: p_1, n \in \N\}$ of pairwise nondiffeomorphic nonsymplectic minimal 4-manifolds.\\

To finish the example, we claim that the manifold 
\begin{center}
$\mathbb{CP}^2 \# 6 \overline{\mathbb{CP}^2} \# L_{2}$
\end{center}
has the $\infty$-property \ref{Definition 1} for $p_1 = 1$.
\end{example} 

Let $\{Z_{1, 2, n}: n \in \N\}$ be the infinite set of pairwise nondiffeomorphic minimal manifolds with fundamental group of order two that was previously constructed. We proceed to pin down the homeomorphism type of these manifolds. Minding the fundamental group, we have computed $\pi_1(Z_{1, 2, n}) \cong \Z_{2}$. The characteristic numbers are $e(Z_{1, 2, n}) = e(Z) = 9, \sigma(Z_{1, 2, n}) = \sigma(Z) = - 5$, since both the Euler characteristic and the signature are invariant under torus surgeries. Let $\widetilde{Z_{1, 2, n}}$ be the universal cover of $Z_{{1}, 2, n}$. Its signature is $\sigma(\widetilde{Z_{p_1, 1, n}})  = - 10$; by Rokhlin's Theorem \cite{[Rok]}, $\widetilde{Z_{p_1, 1, n}}$ is nonspin. This implies that, for arbitrary $n\in \N$, the manifold $Z_{1, 2, n}$ has $\omega_2$-type (I). By Theorem \ref{Theorem 6}, $Z_{1, 2, n}$ is homeomorphic to $\mathbb{CP}^2 \# 6 \overline{\mathbb{CP}^2} \# L_{2}$. The claim follows from the existence of the infinite set $\{Z_{1, 2, n}: n\in \N\}$ of pairwise nondiffeomorphic manifolds that was constructed before.\\


\begin{example}{\label{Example 18}} In this example, irreducible 4-manifolds with $c_1^2 = 1$, and $\chi_h = 1$ are constructed by modifying our choice of gluing map in the construction of the symplectic sum. We begin by unveiling a surface of genus 2 and self intersection zero symplectically embedded in $T^2\times S^2 \# 4 \overline{\mathbb{CP}^2}$ that intersects each exceptional sphere at a point. Consider the product of a 2-torus and a 2-sphere $T^2\times S^2$ equipped with the product symplectic form. Take the union $T^2\times \{s_1\} \cup \{t_1\} \times S^2 \cup T^2 \times \{s_2\}$ for $t_1\in T^2$ and $s_1, s_2 \in S^2$. This wedge of symplectic surfaces has two double points, which come form the intersection of $\{t_1\}\times S^2$ with the two tori. Resolving symplectically \cite{[Gom]} results in a surface of genus 2 and self intersection 4. Blow up four times, and obtain a surface of genus 2 and self intersection zero $\Sigma_2$ that is symplectically embedded in $T^2\times S^2 \# 4 \overline{\mathbb{CP}^2}$.

We build the generalized fiber sum

\begin{center}
$Z_{\phi}: = T^2\times S^2 \# 3 \overline{\mathbb{CP}^2} \#_{\Sigma_2' = \Sigma_2}  T^2\times S^2 \# 4 \overline{\mathbb{CP}^2}$,
\end{center}

where $\Sigma_2' \subset T^2\times S^2 \# 3 \overline{\mathbb{CP}^2}$ are taken as in Example \ref{Example 17}, and the glueing map is the diffeomorphism $\phi: \partial \nu(\Sigma_2') \rightarrow \partial \nu(\Sigma_2)$ of the tubular neighborhoods of the genus 2 surfaces. The same arguments used in Example \ref{Example 17} imply that $Z_{\phi}$ is a minimal symplectic 4-manifold with characteristic numbers $c_1^2(Z_{\phi}) = 1$ and $\chi_h(Z_{\phi}) = 1$.

Minding the fundamental group, we have the following computations. Let the groups involved have the presentations

\begin{center}
$\pi_1(T^2\times S^2 \# 3 \overline{\mathbb{CP}^2}) = \left\langle \alpha, \beta | [\alpha, \beta] = 1\right\rangle$,
\end{center}

\begin{center}
$\pi_1(T^2\times S^2 \# 4 \overline{\mathbb{CP}^2}) = \left\langle x, y | [x, y] = 1\right\rangle$,
\end{center}

\begin{center}
$\pi_1(\Sigma_2') = \left\langle a_1', b_1', a_2', b_2' | [a_1', b_1'][a_2', b_2'] = 1\right\rangle$,
\end{center}

\begin{center}
 $\pi_1(\Sigma_2) = \left\langle a_1, b_1, a_2, b_2 | [a_1, b_1][a_2, b_2] = 1\right\rangle$, and
\end{center}

Let the homomorphism induced by the inclusion, $\Sigma_2' \hookrightarrow T^2\times S^2 \# 3 \overline{\mathbb{CP}^2}$, map the generators of the fundamental groups as

\begin{center}
$a_1'\mapsto \alpha$, $b_1'\mapsto \beta^2$\\
$a_2'\mapsto \alpha^{-1}$, $b_2' \mapsto \beta^{-2}$.
\end{center}

Similarly, let the homomorphism induced by the inclusion, $\Sigma_2 \hookrightarrow T^2\times S^2 \# 4 \overline{\mathbb{CP}^2}$, map the generators of the fundamental groups as

\begin{center}
$a_1\mapsto x$, $b_1\mapsto y$\\
$a_2\mapsto x^{-1}$, $b_2\mapsto y^{-1}$.
\end{center}

Different choices of gluing maps in the construction of $Z_{\phi}$ yield different symplectic manifolds with abelian fundamental groups of rank at most two. For instance, choose as gluing map the diffeomorphism $\phi: \Sigma_2 \times \partial D^2 \rightarrow \Sigma_2' \times \partial D^2$ that induces the isomorphism on fundamental groups that identifies the generators as follows:

\begin{center}
$a_1'\mapsto a_1$, $b_1' \mapsto b_1$\\
$a_2'\mapsto a_2$, $b_2' \mapsto b_2$.
\end{center}

We conclude that $\pi_1(Z_{\phi}) = \left\langle x, y| y^2 = 1, [ x, y] = 1\right\rangle \cong \Z \oplus \Z_2$ using the Seifert-van Kampen theorem.\\

We proceed to construct an irreducible 4-manifold with fundamental group of order two. Inside the surface $\Sigma_2$, let $A$ and $B$ be parallel curves to $\alpha$ and $\beta$ respectively. Let $\phi_1: \Sigma_2 \times \partial D^2 \rightarrow \Sigma_2' \times \partial D^2$ be the diffeomorphism obtained from composing three Dehn twists $D_A D_B D_A$ with $\phi$ (cf. \cite[Section 4.2]{[BK2]}). The composition induces the isomorphism on fundamental groups that assigns the generators as follows:

\begin{center}
$a_1'\mapsto b_1^{-1}$, $b_1' \mapsto b_1a_1b_1^{-1}$\\
$a_2'\mapsto a_2$, $b_2' \mapsto b_2$.
\end{center}

By the Seifert-van Kampen theorem, we have that $\pi_1(Z_{\phi_1})$ is generated by $x, y, \alpha, \beta$ and the following relations hold

\begin{center}
$[\alpha, \beta] = [x, y] = 1$, $y^2 = 1$, $\alpha = y^{-1}$, $\beta = y x y^{-1}$, $\alpha^{-1} = x^{-1}$, $\beta^{-1} = y^{-1}$.
\end{center}

This implies $\alpha = \beta^{-1}$, $x = y^{-1}$, and $x^2 = 1$. Thus, $Z_{\phi_1}$ is an irreducible symplectic 4-manifold with cyclic fundamental group of order two.
\end{example}

\subsection{Technical tools.}{\label{Section 4.8}} We now present two useful lemmas that allow us to extend results on simply connected manifolds to manifolds with abelian fundamental groups.

\begin{lemma}{\label{Lemma 19}} Let $G\in \{ \{1\}, \Z_p, \Z_p\oplus \Z_q, \Z\oplus \Z_q, \Z, \Z\oplus \Z\}$. Let $X$ be an irreducible symplectic simply connected 4-manifold, which contains a homologically essential Lagrangian torus $T$ of self intersection zero such that $\pi_1(X - T) = \{1\}$. Then, there exists an irreducible symplectic 4-manifold $X^G$ with
\begin{itemize}
\item $\pi_1(X^G) = G$,
\item $c_1^2(X^G) = c_1^2(X)$, and
\item $\chi_h(X^G) = \chi_h(X)$.
\end{itemize}
If $G\in \{ \{1\}, \Z_p, \Z_q\oplus \Z_q, \Z \}$, where $q$ is an odd prime number, then $X^G$ has the $\infty$-property.
Moreover, suppose $X$ contains a second homologically essential Lagrangian torus $T'$ of self intersection zero, disjoint from $T$ and $\pi_1(X - T \cup T') = \{1\}$. Then, there exists an infinite family $\{X^G_n : n \in \N\}$ of pairwise nondiffeomorphic 4-manifolds with $\omega_2$-type (I) that consists of infinitely many irreducible symplectic and infinitely many minimal nonsymplectic members, such that
\begin{itemize}
\item $\pi_1(X^G_n) = G$,
\item $c_1^2(X^G_n) = c_1^2(X)$, and
\item $\chi_h(X^G_n) = \chi_h(X)$
\end{itemize}
for all $n\in \N$. If $G \in \{\{1\}, \Z_p, \Z_q \oplus \Z_q, \Z\}$, then $X^G$ has the $\infty^2$-property.
\end{lemma}

\begin{proof} Consider the 4-torus $T^2\times T^2$ equipped with the product symplectic form. Perturb the symplectic form on $X$ so that $T$ becomes symplectic \cite[Lemma 1.6]{[Gom]}. Build the generalized fiber sum
\begin{center}
$Z:= X \#_{T = T'} T^2\times T^2$,
\end{center}

where $T':= \{x\} \times T^2 \subset T^4$. Using the notation in Proposition \ref{Proposition 15}, by the Seifert-van Kampen theorem we have $\pi_1(Z) \cong \Z x \oplus \Z y$; the elements $x$ and $y$ each generate an infinite cyclic factor of $\Z^2$. As done in Example \ref{Example 17}, an iterated usage of \cite{[Gom], [Ush], [HKo]} allows us to conclude that $Z$ is an irreducible symplectic 4-manifold with characteristic numbers $c_1^2(Z) = c_1^2(X)$ and $\chi_h(Z) = \chi_h(X)$. 

The two Lagrangian tori $T_1$ and $T_2$ are contained inside the manifold $Z$ and they are available for surgery. Using the notation of Proposition \ref{Proposition 15}, apply a $(-p/1)$- and a $(-q/n)$-torus surgery on $T_1$ and $T_2$ along the curves $x$ and $y$ respectively, to produce a set of 4-manifolds\begin{center} $\{Z_{p, q, n}: p, q, n \in \N\}$. \end{center} Lemma \ref{Lemma 13} yields $\pi_1(Z_{p, q, n}) \cong \left\langle x, y | x^p = 1, y^q = 1^n, [x, y] = 1 \right\rangle$.

The characteristic numbers are invariant under torus surgeries. Therefore, we have that $c_1^2(Z_{p, q, n}) = c_1^2(Z) = c_1^2(X)$, and $\chi_h(Z_{p, q, n}) = \chi_h(Z) = \chi_h(X)$ for all $p, q, n \in \N$. The manifolds $\{Z_{p, q, 1} : p, q\in \N\}$ are symplectic since the case $n = 1$  corresponds to Luttinger surgeries \cite{[Lu], [ADK]}. 

The infinite set of pairwise nondiffeomorphic minimal nonsymplectic 4-manifolds $\{Z_{p, q, n} : p, q, n  \in \N\}$ can be constructed using the techniques of \cite{[FPS], [FS], [ABP], [NFS]}. The set is reverse-engineered from the symplectic 4-manifold $Z$ in the sense of \cite[Section 2]{[FPS]}, \cite[Section 2]{[NFS]} given that the work of Taubes indicates that the Seiberg-Witten invariant of $Z$ is nontrivial \cite{[T1]}. To verify that the needed hypothesis are satisfied, we use an argument in \cite[Remark p. 343]{[BK3]}. For clarity, we discuss the case $G = \Z$; the other choices of fundamental group follow from a similar argument. The group $\pi_1(\partial \nu(T_2)) \cong \pi_1(T^3)$ is generated by the loops $m_2, l_2$, and $\mu_2$. In the group $\pi_1(Z - \nu(T_2))$, the identities $m_2 = y, l_2 = 1$, and $\mu_2 = 1$ hold by Proposition \ref{Proposition 14}. Construct a manifold with infinite cyclic fundamental group
\begin{center}
$Z_{1, 0} = (Z - \nu(T_2)) \cup_{\phi_{Z_{1, 0}}} T^2\times D^2$
\end{center}

using gluing map $\phi_{Z_{1, 0}}$ that is defined as follows. Let $T^2\times D^2 = S^1\times S^1\times D^2$, and define the curves $\alpha := S^1\times \{1\} \times \{1\}, \beta:= \{1\}\times S^1 \times \{1\}$, and $\mu_{Z_{1, 0}}:= \{(1, 1)\} \times \partial D^2$ that are contained in $S^1\times S^1\times D^2$. Let $\phi_{Z_{1, 0}}$ be the gluing map defined by the identification of curves $\alpha \mapsto l_2, \beta \mapsto \mu_2$, and $\mu_{Z_{1, 0}}\mapsto m_2^{-1}$. 

The core torus $\widetilde{T}:= T^2\times \{0\} \subset T^2\times D^2$ of the surgery is nullhomologous. Notice that we have $Z - \nu(T_2) = Z_{1, 0} - \nu(\widetilde{T})$. Indeed, the meridian $\mu_{\widetilde{T}}$ is nontrivial in $Z_{1, 0} - \nu(\widetilde{T})$ due to our identification. The torus surgery that was applied kills a generator of the first homology group, as well as two generators of the second homology group. These generators correspond to the class of the torus to which we apply the surgery, and the class of its dual. Moreover, the curve $\beta$ is nullhomotopic in $Z - \nu(T_2) = Z_{1, 0} - \nu(\widetilde{T})$, since it was identified with $\mu_2$. Thus, the curve $\beta$ is nullhomologous in the complement. The manifold $Z$ can be recovered from $Z_{1, 0}$ by applying a $1/0$-torus surgery on $\widetilde{T}$ along the curve $\beta$. Applying a $n/1$-torus surgery on $\widetilde{T}$ along $\beta$ for $n\in \N$ produces a 4-manifold $Z_{1, 0, n}$ with infinite cyclic fundamental group. Another description of this manifold is as the product of applying surgery to $Z$ on $T_2$ along $m_2$ with the identification $\mu_2^n m_2^{-1} \mapsto y^{-1}$, i.e., a $1/n$-torus surgery.

The hypothesis of reverse-engineering of \cite{[FPS],[FS], [NFS]} are satisfied. The diffeomorphism classes are distinguished by the Seiberg-Witten invariants \cite[Theorem 5.3]{[FS]}, which are calculated with the Morgan-Mrowka-Szab\'o formula \cite{[MMS]} using the formulation given in \cite[Theorem 3.4]{[SZs]}. We have then produced an infinite set $\{Z_{1, 0, n} :  n  \in \N\}$ of pairwise nondiffeomorphic minimal nonsymplectic 4-manifolds \cite[Theorem 5.3]{[FS]}, \cite[Theorem 1]{[NFS]} with infinite cyclic fundamental group. Notice that we are abusing notation regarding the infinite set. Since we do not know if the manifold $Z$ has only one Seiberg-Witten basic class up to sign, we can not conclude that the manifolds $Z_{1, 0, n}$ are pairwise nondiffeomorphic (cf. \cite[Corollary 1]{[FPS]}). However, we do know that $Z$ has a basic class  \cite{[T1]}, and \cite[Corollary 2]{[FPS]} implies that among the manifolds $\{Z_{1, 0, n}: n\in \N\}$ there are infinitely many pairwise nondiffeomorphic. We have kept the same notation for them.

The presence of a homologically essential torus $T' \subset Z$ with the homomorphism $\pi_1(T')\rightarrow \pi_1(Z_{p, q, 1}) \cong G$ induced by the inclusion allows us to perform Fintushel and Stern's Knot surgery \cite{[FS3]}, \cite[Theorem 4]{[RF]}, \cite[Lecture 3]{[FS6]} (see \cite[Corollary 24]{[BPa3]} as well), and produce infinitely many pairwise nondiffeomorphic 4-manifolds $\{X^G_n: n\in \N\}$. Given that $T'$ is homologically essential, by \cite[Lemma 1.6]{[Gom]} one can perturb the symplectic form so that $T'$ becomes symplectic. A result of W. Thurston \cite{[WT]} implies that for fibered knots, the manifolds constructed in this way are symplectic \cite[Remark 10.3.5]{[GS]}. The presence of a symplectic structure allows us to conclude on irreducibility using the results in \cite{[HKo]}. 

In order to conclude on the $\infty$- and $\infty^2$-properties, we examine the homeomorphism types of the constructed manifolds. The hypothesis on the symplectic torus of self intersection zero contained in the minimal symplectic manifold $X$ implies that $b_2^+(X) \geq 3$. Thus, the stability conditions of Theorem \ref{Theorem 8} and of Theorem \ref{Theorem 9} are satisfied. Let $b^+:= b_2^+(X)$ and $b^-:= b_2^-(X)$.  By Theorem \ref{Theorem 7}, the manifolds in the set $\{Z_{p, 1, n}: n \in \N\}$ are homeomorphic to $b^+ \mathbb{CP}^2 \# b^- \overline{\mathbb{CP}^2} \# L_{p}$. Assuming $p = q$ to be an odd prime number, Theorem \ref{Theorem 8} implies that the manifolds in the set $\{Z_{q, q, n}: n\in \N\}$ are homeomorphic to 
$b^+ \mathbb{CP}^2 \# b^- \overline{\mathbb{CP}^2} \# L_{q, q}$. Theorem \ref{Theorem 9} implies that every manifold with infinite fundamental group constructed above is homeomorphic to $(b^+ + 1) \mathbb{CP}^2 \# (b^- + 1) \overline{\mathbb{CP}^2} \# S^3\times S^1$.
\end{proof}

The previous result can be extended when the manifold $X$ in the hypothesis of Lemma \ref{Lemma 19} contains a submanifold of higher genus. The genus two case is exemplified in the following lemma.

\begin{lemma}{\label{Lemma 20}}  Let $G\in \{ \{1\}, \Z_p, \Z_p\oplus \Z_q, \Z\oplus \Z_q, \Z, \Z\oplus \Z\}$. Let $X$ be an irreducible symplectic simply connected 4-manifold, which contains a symplectic surface $\Sigma$ of genus 2 and self intersection zero  such that $\pi_1(X - \Sigma) = \{1\}$. Then, there exists an infinite family $\{X^G_n : n\in \N\}$ of pairwise nondiffeomorphic 4-manifolds with $\omega_2$-type (I) that consists of infinitely many irreducible symplectic and infinitely many minimal nonsymplectic members, such that 
\begin{itemize}
\item $\pi_1(X^G_n) = G$,
\item $c_1^2(X^G_n) = c_1^2(X) + 8$, and
\item $\chi_h(X^G_n) = \chi_h(X) + 1$
\end{itemize}
for all $n\in \N$. If $G\in \{ \{1\}, \Z_p, \Z_q\oplus \Z_q, \Z \}$, where $q$ is an odd prime number, then $X^G$ has the $\infty^2$-property.
\end{lemma}

\begin{proof} Consider the product $T^2\times \Sigma_2$ equipped with the product symplectic form. Build the generalized fiber sum
\begin{center}
$Z:= X \#_{\Sigma = \Sigma_2} T^2\times \Sigma_2$
\end{center}

along the corresponding genus 2 surfaces. By Proposition \ref{Proposition 15}, the loops $a_1, b_1, a_2, b_2$ lie on the genus 2 surface. The Seifert-van Kampen theorem implies that the fundamental group is $\pi_1(Z) \cong \Z x \oplus \Z y$, since we assumed that $\pi_1(X - \Sigma) = \{1\}$. Arguing as before, we conclude that $Z$ is an irreducible symplectic manifold with characteristic numbers $c_1^2(Z) = c_1^2(X) + 8$ and $\chi_h(Z) = \chi_h(X) + 1$.

The symplectic manifold $Z$ contains 4 homologically  Lagrangian tori available for surgery, which are contained in the $T^2\times \Sigma_2$ block of the symplectic sum. Perform $-1/1$ Luttinger surgeries on $T_1$ and $T_2$ along $x$ and $y$ respectively. This results in an irreducible symplectic simply connected 4-manifold $Y$ that contains two pairs of homologically essential Lagrangian tori $T_3$ and $T_4$, such that $Y - (T_3 \cup T_4)$ is simply connected by Proposition \ref{Proposition 15}. The lemma now follows from Lemma \ref{Lemma 19}.
\end{proof}



\subsection{Telescoping triples.}{\label{Section 4.9}} The basic building blocks used  to systematically populate geographical regions of irreducible symplectic 4-manifolds are presented in this section.

\begin{definition} (\cite[Definition 2]{[ABBKP]}). An ordered triple $(X, T_1, T_2)$ consisting of a symplectic 4-manifold $X$ and two disjointly 
embedded Lagrangian tori $T_1$ and $T_2$ is called a telescoping triple if 
\begin{enumerate}
\item The tori $T_1$ and $T_2$ span a 2-dimensional subspace of $H_2(X; \R)$.
\item $\pi_1(X) \cong \Z^2$ and the inclusion induces an isomorphism\begin{center} $\pi_1(X - (T_1 \cup T_2)) \rightarrow \pi_1(X)$.\end{center} In particular, the meridians of the tori are trivial in $\pi_1(X - (T_1 \cup T_2))$.
\item The image of the homomorphism induced by the corresponding inclusion $\pi_1(T_1) \rightarrow \pi_1(X)$ is a summand $\Z \subset \pi_1(X)$.
\item The homomorphism induced by inclusion $\pi_1(T_2) \rightarrow \pi_1(X)$ is an isomorphism.
\end{enumerate}
The telescoping triple is called minimal if $X$ itself is minimal.
\end{definition}

The order of the tori in the definition is relevant. The meridians $\mu_{T_1}$, $\mu_{T_2}$ in $\pi_1(X - (T_1 \cup T_2))$ are trivial, and the fundamental groups involved are abelian. The push off of an oriented loop $\gamma \subset T_i$ into $X - (T_1 \cup T_2)$ with 
respect to any (Lagrangian) framing of the normal bundle of $T_i$ represents a well defined element of 
$\pi_1(X - (T_1 \cup T_2) )$, which is independent of the choices of framing and base-point.\\

The first condition says that Lagrangian tori $T_1$ and $T_2$ are linearly independent in $H_2(X; \R)$. The symplectic form on $X$ can be slightly perturbed so that one of the $T_i$ remains Lagrangian while the other becomes 
symplectic \cite[Lemma 1.6]{[Gom]}.

Out of two telescoping triples, one is able to produce another one as follows.

\begin{proposition} \cite[Proposition 3]{[ABBKP]}.{\label{Proposition 22}} Let $(X, T_1, T_2)$ and $(X', T_1', T_2')$ be two telescoping triples. Then for an appropriate gluing map the triple

\begin{center}
$(X \#_{T_2 = T_1'} X', T_1, T_2')$
\end{center}
is again a telescoping triple. If $X$ and $X'$ are minimal symplectic 4-manifolds, then the resulting telescoping triple is minimal. The characteristic numbers of $X \#_{T_2 = T_1'} X'$ are  \begin{center} $c_1^2(X) + c_1^2(X')$ and $\chi_h(X) + \chi_h(X')$.\end{center}
\end{proposition}

Minimal telescoping triples are irreducible by \cite[Corollary 1]{[HKo]}. The existing telescoping triples are gathered in the following result, which was proven in \cite[Section 5]{[ABBKP]}, \cite{[To]}.

\begin{theorem}{\label{Theorem 23}} Existence of telescoping triples.
\begin{itemize} 
\item There exists a minimal telescoping triple $(A, T_1, T_2)$ satisfying $c_1^2(A) = 7$, $\chi_h(A) = 1$.
\item For each $g\geq 0$, there exists a minimal telescoping triple $(B_g, T_1, T_2)$ satisfying $c_1^2(B_g) = 6 + 8g$, $\chi(B_g) = 1 + g$.
\item There exists a minimal telescoping triple $(C, T_1, T_2)$ satisfying $c_1^2(C) = 5$, $\chi_h(C) = 1$.
\item There exists a minimal telescoping triple $(D, T_1, T_2)$ satisfying $c_1^2(D) = 4$, $\chi_h(D) = 1$.
\item There exists a minimal telescoping triple $(F, T_1, T_2)$ satisfying $c_1^2(F) = 2$, $\chi_h(F) = 1$.\\
\end{itemize}
\end{theorem}

\begin{remark}{\label{Remark 24}} \emph{The telescoping triples are obtained in the construction process of minimal symplectic 4-manifolds homeomorphic to $\mathbb{CP}^2 \# k\overline{\mathbb{CP}^2}$ \cite{[FPS], [A1], [AP], [BK2], [BK3], [AP1]}. In \cite{[NFS]}, Fintushel and Stern introduced a procedure to unveil exotic smooth structures on these manifolds by performing surgeries on nullhomologous tori that are contained inside rational surfaces (cf. \cite{[FS]}). Minimal 4-manifolds with abelian fundamental group of small rank lying on the line $\chi_h = 1$ can be constructed using the techniques of \cite{[NFS]}. This requires only a small variation on the choices of surgeries on the pinwheel structures on \cite[Section 7, Section 8]{[NFS]}.}
\end{remark}

An immediate consequence of Theorem \ref{Theorem 23} is the study of the geography and botany of irreducible symplectic 4-manifolds for various fundamental groups (cf. \cite{[FM], [LO], [OKo], [HK88], [HK90], [HK93], [Wa], [Gom], [BK1], [OS], [IS], [BK4],[BK3], [AP1], [ABBKP], [AP], [To], [Y]}). In particular, the following result extends \cite[Theorem 1]{[To]}.

\begin{corollary}{\label{Corollary 25}}  Let $G \in \{\Z_p, \Z_q\oplus \Z_p, \Z, \Z \oplus \Z_q, \Z \oplus \Z\}$, and let $n\geq 1$ and $m\geq 1$. For each of the following pairs of
integers
\begin{enumerate}
\item $(c, \chi) = (7n, n)$,
\item $(c, \chi) = (5n, n)$,
\item $(c, \chi) = (4n, n)$,
\item $(c, \chi) = (2n, n)$,
\item $(c, \chi) = ((6 + 8g)n, (1 + g)n)$ (for $g\geq 0$),
\item $(c, \chi) = (7n + (6 + 8g)m, n + (1 + g)m)$,
\item $(c, \chi) = (7n + 5m, n + m)$,
\item $(c, \chi) = (7n + 4m, n + m)$,
\item $(c, \chi) = (7n + 2m, n + m)$,
\item $(c, \chi) = ((6 + 8g)n + 5m,(1 + g)n + m)$ (for $g\geq 0$),
\item $(c, \chi) = ((6 + 8g)n + 4m,(1 + g)n + m)$ (for $g\geq 0$),
\item $(c, \chi) = ((6 + 8g)n + 2m,(1 + g)n + m)$ (for $g\geq 0$),
\item $(c, \chi) = (5n + 4m , n + m)$,
\item $(c, \chi) = (5n + 2m , n + m)$, and
\item $(c, \chi) = (4n + 2m , n + m)$,
\end{enumerate}
there exists an irreducible symplectic 4-manifold $X^G$ with $\omega_2$-type (I) and
\begin{center} 
$\pi_1(X^G) = G$ and $(c_1^2(X^G), \chi_h(X^G)) = (c, \chi)$.
\end{center}
Moreover, if $G = \Z_p$, then $X^G$ has the $\infty$-property.
Assume $q$ to be an odd prime number, and let $\chi > 1$. If $G \in \{\Z_q\oplus \Z_q, \Z\}$, then $X^G$ has the $\infty$-property.
\end{corollary}


\begin{remark}{\label{Remark 26}} Symplectic universal covers. \emph{It is natural to ask whether the universal covers of the symplectic manifolds constructed are standard or exotic (we thank Paul Kirk for bringing it to our attention). If they were standard, then the action of the fundamental group would be exotic. Exotic smooth actions were studied in \cite{[FSS]}. The universal covers in our constructions admit a symplectic structure; thus, they are exotic. This phenomena has previously been observed in \cite{[FST]}. Indeed, let $\pi: \widetilde{X}\rightarrow X$ be the cover of a symplectic manifold $X$ of $\omega_2$-type (I) with finite cyclic fundamental group, and denote its symplectic structure by $\omega$. By Freedman's theorem \cite{[F]}, $\widetilde{X}$ is homeomorphic to}

\begin{center}
$(p(b_2^+ + 1) - 1) \mathbb{CP}^2 \# (p(b_2^- + 1) - 1) \overline{\mathbb{CP}^2}$.
\end{center}

\emph{The cover $\widetilde{X}$ admits a symplectic structure, since the pullback $\pi^{\ast}\omega$ is a symplectic form on $\widetilde{X}$.  By Taubes' result \cite{[T1]}, the Seiberg Witten invariant of $\widetilde{X}$ is nontrivial. Since $(p(b_2^+ + 1) - 1) \mathbb{CP}^2 \# (p(b_2^- + 1) - 1) \overline{\mathbb{CP}^2}$ has vanishing Seiberg-Witten invariants \cite[Theorem 2.4.6]{[GS]}, $\widetilde{X}$ is not diffeomorphic to it.}

\end{remark}

\section{Filling in geographical regions.}{\label{Section 5}}

We prove Theorem \ref{Theorem 2} in this section. We will first construct irreducible 4-manifolds with negative signature for every possible lattice point on the line $\chi_h = 2$, and use these manifolds to construct more manifolds on the lines $\chi_h > 2$ using Proposition \ref{Proposition 16}. The following bound is a guide to which 4-manifolds are to be constructed.

\begin{lemma}{\label{Lemma 27}} Let $X$ be a minimal symplectic 4-manifold. Then
\begin{center} $b_2^-(X) \leq 10\chi_h(X) - 1$. \end{center}
\end{lemma}

\begin{proof} By the work of Taubes \cite{[T0], [T2]} (see also \cite[Corollary 4.9]{[Ko]}), a minimal symplectic 4-manifold $X$ must satisfy $c_1^2(X) \geq 0$ (\cite[Corollary 13.1.12]{[OS1]}). A straight forward computation shows that the restriction implies \begin{center}$b_2^-(X) \leq 5 b_2^+(X) + 4 = 10 (\chi_h(X)) - 1$.\end{center}
\end{proof}

There are other restrictions for 4-manifolds with $b_1\leq 1$ \cite[Lemma 2.6]{[S2]}; Lemma \ref{Lemma 27} suffices for our purposes.

\subsection{Irreducible manifolds on the line $\chi_h = 2$}{\label{Section 5.1}} With the purpose of proving Theorem \ref{Theorem 2}, we begin to systematically populate every lattice point on the line $\chi_h = n$ for every  integer $n\geq 2$ that corresponds to manifolds with negative signature, with an irreducible  4-manifold. For the line $\chi_h = 2$, we employ the following result.

\begin{proposition}{\label{Proposition 28}} Let $b^- \in \{4, 5, \ldots, 17, 18, 19\}$. There exists a minimal nonspin symplectic simply connected 4-manifold $X_{3, b^-}$ with second Betti number given by $b_2^+(X) = 3$ and $b_2^-(X) = b^-$, which contains two homologically essential Lagrangian tori $T_1$ and $T_2$ such that \begin{center} $\pi_1(X_{3, b^-} - (T_1 \cup T_2)) = \{1\}$.\end{center}
\end{proposition}

The manifolds of Proposition \ref{Proposition 28} unveil exotic smooth structures on $3\mathbb{CP}^2 \# b_2^- \overline{\mathbb{CP}^2}$. This enterprise has already been done previously \cite{[Gom], [SZs1], [S1], [BPa], [BPa1], [BPa2], [JPa1], [JPa2], [SS2], [BK1], [A1], [BK3], [BK4], [AP1], [ABP], [ABBKP], [AP], [PKH]}. What concerns us for our purposes is the existence of the tori $T_1$ and $T_2$ with the required traits (cf. \cite[Theorem 18]{[BK3]}) in order to be able to apply Lemma \ref{Lemma 19}.

\begin{table}[ht] {\label{Table 1}}
\caption{Minimal symplectic 4-manifolds of Proposition \ref{Proposition 28}.}
\centering
\begin{tabular}{c | c | c | c | c }
\hline\hline
$b_2^-$ & $(c_1^2, \chi_h, \sigma)$ & Symplectic sum/manifold. & $\#$ of Luttinger surgeries. & Reference. \\
\hline\hline
$4$   & $(15, 2, -1)$  & $X_{1, 2} \#_{\Sigma_2} T^2\times \Sigma_2$ & $two$ & \cite[Section 9]{[AP]}  \\
$5$    & $(14, 2, -2)$  & $X_{3, 5}$ & $none$ & \cite[Theorem 18]{[BK3]} \\
$6$    & $(13, 2, -3)$ & $X_{1, 2} \#_{\Sigma_2} (T^4\# 2\overline{\mathbb{CP}^2})$ & $none$ & \cite[Section 9]{[AP]}\\
$7$    & $(12, 2, -4)$ &$X_{3, 7}$ & $none$ & \cite[Corollary 15]{[BK3]} \\
$8$   & $(11, 2, -5)$ & $X_{1, 5}\#_{\Sigma_2} (T^4\# \overline{\mathbb{CP}^2})$ & $none$ & \cite[Sections 3 and 4]{[AP]}.\\
$9$  & $(10, 2, -6)$ & $X_{1, 5} \#_{\Sigma_2} (T^4 \# 2\overline{\mathbb{CP}^2})$ & $none$ & \cite[Corollary 16]{[BK3]}.\\

$10$  &$(9, 2, -7)$ & $X_{1, 6} \#_{\Sigma_2} (T^4 \# 2 \overline{\mathbb{CP}^2})$ & $none$ & \cite[Lemma 15]{[AP]}  \\
$11$  & $(8, 2, -8)$ & $E(1) \#_{T^2} S$ & $six$ & \cite[Lemma 16]{[ABBKP]}  \\
$12$   & $(7, 2, -9)$ & $E(1) \#_{T^2} A$ & $none$ & Theorem \ref{Theorem 23} \\
$13$  & $(6, 2, -10)$ & $E(1) \#_{T^2} B$ & $none$ & Theorem \ref{Theorem 23}  \\
$14$  & $(5, 2, -11)$ & $\mathbb{CP}^2 \# 12 \overline{\mathbb{CP}^2} \#_{\Sigma_2} T^2\times \Sigma_2$ & $two$ & \cite[Building Block 5.6]{[Gom]}  \\
$15$  & $(4, 2, -12)$ & $\mathbb{CP}^2 \# 13 \overline{\mathbb{CP}^2} \#_{\Sigma_2} T^2\times \Sigma_2$ & $two$ & \cite[Building Block 5.6]{[Gom]}  \\
$16$ & $(3, 2, -13)$ & $\mathbb{CP}^2 \# 12 \overline{\mathbb{CP}^2} \#_{\Sigma_2} (T^4\# 2 \overline{\mathbb{CP}^2})$ & $none$ & \cite[Building Blocks 5.6 and 5.6] {[Gom]}  \\
$17$ & $(2, 2, -14)$ & $\mathbb{CP}^2 \# 13 \overline{\mathbb{CP}^2} \#_{\Sigma_2} (T^4\# 2 \overline{\mathbb{CP}^2})$ & $none$ & \cite[Building Blocks 5.6 and 5.7]{[Gom]}  \\
$18$ & $(1, 2, -15)$ & $S_{1, 1}$ & $none$ & \cite[Example 5.4]{[Gom]} \\
$19$ & $(0, 2,  -16)$ & $E(1)\#_{T^2} T^4 \#_{T^2} E(1)$ & $none$ & \cite{[SZs]}  \\


\hline
\end{tabular}
\label{table:masspr}
\end{table}

\begin{proof} Table \ref{Table 1} provides a guideline of where the manifolds of Proposition \ref{Proposition 28} are built or obtained from, as well as the number of Luttinger surgeries required in their construction. We will prove the proposition for one case, given that the argument for the other manifolds is analogous. We proceed to construct an irreducible symplectic simply connected 4-manifold  $X_{3, 15}$ with \begin{center} $(c_1^2(X_{3, 15}), \chi_h(X_{3, 15}), \sigma(X_{3, 15}))= (4, 2, -12)$ \end{center} that contains the two tori as in the statement of the proposition. 
Take a quartic curve inside $\mathbb{CP}^2$ with a single transverse double point. By blowing up at this singular point, a surface of genus 2 and self intersection 12 is obtained in $\mathbb{CP}^2 \# \overline{\mathbb{CP}^2}$. Proceed to blow up at each of these different points. The proper transform $F$of the twelve blow ups is a complex curve of genus 2 and self intersection zero, which is contained in $\mathbb{CP}^2 \# 13 \overline{\mathbb{CP}^2}$ (cf. \cite[Building block 5.6]{[Gom]}). From the construction is clear that the surface $F$ intersects every exceptional sphere from each of the thirteen blow ups. Moreover, $\pi_1(\mathbb{CP}^2 \# 13 \overline{\mathbb{CP}^2} - F) = \{1\}$, since the exceptional sphere introduced during the thirteenth blow up provides a nullhomotopy for the meridian of $F$ once it is removed from the manifold.

Build the generalized fiber sum
\begin{center}
$Z_{4, 2}:= (\mathbb{CP}^2 \# 13 \overline{\mathbb{CP}^2}) \#_{F = \Sigma_2} (T^2\times \Sigma_2)$.
\end{center}

Again, the manifold $Z_{4, 2}$ is symplectic by \cite{[Gom]}, it is minimal by \cite[Theorem 1.1]{[Ush]}. Its characteristic numbers are readily computed to be $c_1^2(Z_{4, 2}) = c_1^2(\mathbb{CP}^2 \# 13 \overline{\mathbb{CP}^2}) + c_1^2(T^2\times \Sigma_2) + 8 = 4$, and $\chi_h(Z_{4, 2}) = \chi_h(\mathbb{CP}^2 \# 13 \overline{\mathbb{CP}^2}) + \chi_h(T^2\times \Sigma_2) + 1 = 2$.

One concludes $\pi_1(Z_{4, 2}) \cong \mathbb{Z} x \oplus \mathbb{Z} y$ from Proposition \ref{Proposition 15} and the Seifert-van Kampen theorem. Since the fundamental group is residually finite, $Z_{4, 2}$ is irreducible by \cite[Corollary 1]{[HKo]}.
There are eight pairs of geometrically dual Lagrangian tori contained in $Z_{4, 2}$ that come from the $T^2\times \Sigma_2$ block in the construction of the symplectic sum.  We apply torus surgeries to $Z_{4, 2}$ following Proposition \ref{Proposition 15} and the notation there. Perform $-1/1$-torus surgeries on $T_3$ and $T_4$ along $m_3 = x$ and $m_4 = y$ respectively to obtain a simply connected irreducible symplectic 4-manifold $X_{3, 15}$ \cite{[Lu], [ADK]}, Proposition \ref{Proposition 15}.
Both tori $T_1$ and $T_2$ have each a geometrically dual torus inside $X_{3, 15}$. Thus, $T_1$ and $T_2$ are homologically essential Lagrangian tori. From Proposition \ref{Proposition 15} one concludes that their meridians are trivial in the complement, and one has $\pi_1(X_{3, 15} - (T_1 \cup T_2)) = \{1\}$.
\end{proof}

\subsection{Region of negative signature: proof of Theorem \ref{Theorem 2}}{\label{Section 5.2}} We will fill out the region of the geography of irreducible symplectic 4-manifolds whose fundamental group is among the choices $\{\Z_p, \Z_p \oplus \Z_q, \Z\oplus \Z_q, \Z, \Z\oplus \Z\}$ by constructing irreducible manifolds with the given choices of fundamental groups realizing all pairs of integers

\begin{center}
$(c_1^2, \chi_h)$ when $0\leq c_1^2 \leq 8\chi_h - 1$.
\end{center}

Note that under the chosen coordinates, a 4-manifold with $c^2_1 = 8\chi + k$ has signature $k$. We proceed to prove the main result Theorem \ref{Theorem 2}.

\begin{proof} The homeomorphism types are pinned down by the work of Hambleton-Kreck, Hambleton-Teichner presented in Section \ref{Section 4.1}, and Section \ref{Section 4.2}. A worked out argument on these matters was given in Example \ref{Example 17}. The line $\chi_h = 1$ was populated and its botany studied in Corollary \ref{Corollary 25}, and Example \ref{Example 17} (see Remark \ref{Remark 24} also). The line $\chi_h = 2$ is populated as a consequence of Proposition \ref{Proposition 28}, and Lemma \ref{Lemma 19}. 
Assume $n\geq 3$. The region
 \begin{center}$\chi_h = n$,\end{center} \begin{center}$8n - 16 \leq c_1^2 \leq 8n - 1$\end{center} in statement of Theorem \ref{Theorem 2} is populated by an iteration on the construction of generalized fiber sums of the manifolds of Proposition \ref{Proposition 28} with $n$ copies of the manifold $Z$ of Proposition \ref{Proposition 16} along tori. Notice that by \cite[Lemma 1.6]{[Gom]}, the symplectic form on these manifolds can be perturbed so that the Lagrangian tori become symplectic. The Seifert-van Kampen theorem implies that the fundamental group is trivial. The claim now follows from Lemma \ref{Lemma 19}.
The region \begin{center}$\chi_h = n$,\end{center} \begin{center} $0 \leq c_1^2 < 8n - 16$\end{center} is populated by forming the generalized fiber sum of the manifolds in Proposition \ref{Proposition 28} and the telescoping triples of Theorem \ref{Theorem 23} along tori. Lemma \ref{Lemma 27} serves as guidance as of which telescoping triple is to be used according to its signature. The torus involved in the gluing of each telescoping triple is the torus $T_2$; the Seifert-van Kampen theorem implies that the fundamental group of the symplectic sum is trivial. The reader will notice that there are several ways choices on the building blocks used on the constructions of the generalized fiber sums; various points can be populated alternatively by using Lemma \ref{Lemma 20}. The claim now follows from an iterated application of \cite{[Gom], [Ush], [HKo]}, and Lemma \ref{Lemma 19} as before.
\end{proof}

\subsection{Regions of nonnegative signature}{\label{Section 5.3}} The tools of Section \ref{Section 4.8} can also be used to extend results on simply connected 4-manifolds with nonnegative signature \cite{[Sti], [BK], [ABBKP], [JPa], [AP3]} to other fundamental groups. We give a sample result in this section, first recalling the following theorem.



\begin{theorem} (Akhmedov -  B.D. Park \cite{[AP3]}). For each of the following pairs $(c, \chi) \in \Z \oplus \Z$
\begin{itemize}
\item $(c, \chi) = (200, 25)$,
\item $(c, \chi) = (201, 25)$,
\item $(c, \chi) = (204, 24)$,
\item $(c, \chi) = (219, 27)$, and
\item $(c, \chi) = (212, 26)$,
\end{itemize}
there exists a minimal symplectic simply connected 4-manifold $X$ with characteristic numbers $c_1^2(X) = c$ and $\chi_h(X) = \chi$. Moreover, $X$ contains two disjoint homologically essential Lagrangian tori, $T_1, T_2$, each of self-intersection zero, and such that $\pi_1(X - T_1 \cup T_2) = \pi_1(X - T_i) = \{1\}$ for $i \in \{1, 2\}$.
\end{theorem}

The previous theorem and Lemma \ref{Lemma 19} yield the following result.

\begin{corollary}{\label{Corollary 29}} Let $G\in \{\{1\}, \Z_p, \Z_p\oplus \Z_q, \Z, \Z\oplus \Z_q, \Z \oplus \Z\}$, and assume $m, r, s, t$ to be odd positive integers that satisfy $m\geq 49, r\geq 47, s\geq 53$, and $t\geq 51$. For each of the following pairs of integers 

\begin{enumerate}
\item $(c, \chi) = (4m + 4, \frac{1}{2}(m + 1))$,
\item $(c, \chi) = (4m+ 5, \frac{1}{2}(m + 1))$,
\item $(c, \chi) = (4r + 6, \frac{1}{2}(r + 1))$ ,
\item $(c, \chi) = (4s + 7, \frac{1}{2}(s + 1))$, and
\item $(c, \chi) = (4t + 8, \frac{1}{2}(t + 1))$,
\end{enumerate}

there exists a 4-manifold $X^G$ with $\omega_2$-type (I) and
\begin{center}
$\pi_1(X^G) = G$ and $(c_1^2(X^G), \chi_h(X^G)) = (c, \chi)$.
\end{center}
If $G \in \{\{1\}, \Z, \Z_p, \Z_q\oplus \Z_q\}$ for $q$ an odd prime number, then the homeomorphism type of $X^G$ has the $\infty^2$-property.
\end{corollary}

\section{Acknowledgments}

The author thanks I. Baykur for suggesting the problem, and M. Marcolli and P. Kirk for their encouragement and support. We thank A. Akhmedov, S. Baldridge, D. Calegari, R. Fintushel, I. Hambleton, M. Kreck, B. D. Park, R. J. Stern, and the referees for comments on an earlier version of this paper. We thank the math department at Caltech, the math department at the State University of Florida and, especially, we are thankful to the Max-Planck Institut f\"ur Mathematik - Bonn for their warm hospitality and excellent working conditions. This work was supported by an IMPRS
scholarship from the Max-Planck Society.

\end{document}